\documentclass[11pt,reqno]{amsart}
\usepackage{amsmath,amssymb}
\usepackage{mathrsfs}
\usepackage{amsfonts}
\usepackage[dvipsnames]{xcolor}
\usepackage{graphicx}
\usepackage{float}  
\usepackage{cite}
\usepackage{cases}
\usepackage{indentfirst}
\allowdisplaybreaks

\newtheorem{theorem}{Theorem}[section]
\newtheorem{lemma}[theorem]{Lemma}

\numberwithin{equation}{section}

\makeatletter
\usepackage[colorlinks=true,citecolor=blue,linkcolor=blue]{hyperref}

\begin{document}
\author{Yasheng Lyu}

\title{\textbf{Remarks on the Pogorelov type estimate for the degenerate $k$-Hessian equation} }

\address{School of Mathematics and Statistics, Xi'an Jiaotong University, Xi'an, Shaanxi 710049, People's Republic of China}

\email{lvysh21@stu.xjtu.edu.cn}

\begin{abstract}
This paper investigates the Pogorelov type estimate for the $k$-Hessian equation under a new condition on the degenerate right-hand side $f$. 
\end{abstract}

\keywords{$k$-Hessian equation, Interior estimate, Degeneracy.}

\subjclass[2020]{35J60, 35J70, 35B45.}
 \date{}
\maketitle

\pagestyle{myheadings}
\markboth{$~$ \hfill{\uppercase{Yasheng Lyu}}\hfill $~$}{$~$ \hfill{\uppercase{Pogorelov type estimate for the degenerate $k$-Hessian equation}}\hfill $~$}

\section{Introduction}

In this paper, we study the degenerate $k$-Hessian equation 
\begin{equation}\label{eqn1.1}
\sigma_{k}\big(\lambda\big(D^{2}u\big)\big)=f,\quad f\geq0,\quad \text{in}\ \Omega,
\end{equation}
for $5\leq k\leq n$.
Here, $\Omega$ is a bounded domain in $\mathbb{R}^{n}$, $\lambda(D^{2}u)$ denotes the eigenvalues of the Hessian matrix of $u$, and $\sigma_{k}(\lambda)$ is the $k$-th elementary symmetric polynomial on $\mathbb{R}^{n}$, defined by  
\[
\sigma_{k}(\lambda):=\sum_{1\leq i_{1}<\cdots<i_{k}\leq n}\lambda_{i_{1}}\cdots\lambda_{i_{k}},\quad \text{for}\ k=1,2,\dots,n. 
\]

Following the work of Caffarelli-Nirenberg-Spruck \cite{CNS1985}, a function $u\in C^{2}(\Omega)\cap C^{0}(\overline{\Omega})$ is called $k$-admissible if its Hessian eigenvalues satisfy  
\[
\lambda\big(D^{2}u(x)\big)\in\Gamma_{k}\quad \text{for all}\ x\in\Omega,
\]
where $\Gamma_{k}$ denotes the G{\aa}rding cone defined by 
\[
\Gamma_{k}:=\left\{\lambda\in\mathbb{R}^{n}:\ \sigma_{j}(\lambda)>0\ \text{for}\   j=1,2,\dots,k\right\}.
\]
They established that for any $\lambda\in\Gamma_{k}$ and each $1\leq i\leq n$, the inequality $\partial\sigma_{k}/\partial\lambda_{i}>0$ holds, and that $\sigma_{k}^{1/k}(\lambda)$ is a concave function in $\Gamma_{k}$.
The $k$-Hessian equation is called degenerate if the nonnegative function $f$ is allowed to vanish at some points in $\overline{\Omega}$, and non-degenerate if $\inf_{\Omega}f>\delta$ for some positive constant $\delta$. 

Chou-Wang \cite{Wang2001} established the Pogorelov type interior estimate for the non-degenerate $k$-Hessian equation, which is a famous result.
More recently, Jiao-Jiao \cite{Jiao2024} extended this result to the degenerate case under the condition
\[
f^{\frac{1}{k-1}}\in C^{1,1}(\overline{\Omega}). 
\]
In this paper, we investigate the Pogorelov type interior estimate for \eqref{eqn1.1} under the condition
\[
f^{\frac{3}{2k-2}}\in C^{2,1}(\overline{\Omega_{0}}),\quad \text{where}\ \Omega\Subset\Omega_{0}.
\]
This new condition was introduced in \cite{Lyu2025}.

\section{Pogorelov type interior estimate}

\subsection{Preliminary}

In this paper, the dependence of constants on dimension $n$ and the number $k$ in \eqref{eqn1.1} is omitted, and a constant depending only on $n$ and $k$ is called universal. 
We adopt the following conventions: $\sigma_{0}(\lambda)=1$; $\sigma_{k}(\lambda)=0$ for $k<0$ and $k>n$; 
\[
\sigma_{k;i_{1}i_{2}\cdots i_{j}}(\lambda)=\sigma_{k}(\lambda)\big|_{\lambda_{i_{1}}=\lambda_{i_{2}}=\cdots=\lambda_{i_{j}}=0};
\]
and $\sigma_{k;i_{1}i_{2}\cdots i_{j}}=0$ if $i_{r}=i_{s}$ for some $1\leq r<s\leq j$. 
Some fundamental properties of $\sigma_k$ are listed below.

\begin{lemma}[\!\cite{Hardy1952,Wang2009}]\label{thm4.1}
For any $k\in\{2,3,\dots,n\}$, the following inequalities and identity hold for any $n\times n$ real symmetric matrix $A$ with  $\lambda(A)\in\Gamma_{k}$:
\begin{align*}
&\sigma_{k}^{\frac{1}{k}}(\lambda(A))\leq C_{1}\sigma_{k-1}^{\frac{1}{k-1}}(\lambda(A));\\
&\sigma_{k-1}(\lambda(A))\geq C_{2}\sigma_{1}^{\frac{1}{k-1}}(\lambda(A))\sigma_{k}^{\frac{k-2}{k-1}}(\lambda(A));
\\
&\sum_{i=1}^{n}\sigma_{k-1;i}(\lambda(A))=(n-k+1)\sigma_{k-1}(\lambda(A)),
\end{align*}
where $C_{1}>0$ and $C_{2}>0$ are universal constants.
\end{lemma}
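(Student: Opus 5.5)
Since only the eigenvalues of $A$ enter, I will work throughout with a fixed $\lambda=\lambda(A)\in\Gamma_k$. The plan is to derive all three statements from two classical facts: the Newton--Maclaurin inequalities on $\Gamma_k$, and the expansion of $\sigma_{k-1}$ in the variable $\lambda_i$. I will write $S_j=\sigma_j(\lambda)/\binom{n}{j}$ for the normalized elementary symmetric functions.

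\emph{The identity.} Each monomial $\lambda_{i_1}\cdots\lambda_{i_{k-1}}$ of $\sigma_{k-1}$ appears in $\sigma_{k-1;i}$ exactly when $i\notin\{i_1,\dots,i_{k-1}\}$. This happens for precisely $n-(k-1)$ indices $i$. Summing over $i$ therefore gives $\sum_i\sigma_{k-1;i}=(n-k+1)\sigma_{k-1}$. This identity holds for every $\lambda\in\mathbb{R}^n$, with no cone condition needed.

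\emph{The first inequality.} I will use Maclaurin's inequality $S_k^{1/k}\le S_{k-1}^{1/(k-1)}$, which is valid for $\lambda\in\Gamma_k$ (Hardy--Littlewood--P\'olya for positive vectors, extended to the G{\aa}rding cone as in \cite{Wang2009}). Unwinding the normalization gives the first inequality with
\[
C_1=\binom{n}{k}^{1/k}\binom{n}{k-1}^{-1/(k-1)}.
\]
The one point needing care is that Newton's inequality $S_{j-1}S_{j+1}\le S_j^2$ holds for all real $\lambda$, while positivity of $S_1,\dots,S_k$ on $\Gamma_k$ lets us chain these inequalities and take roots.

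\emph{The second inequality.} Multiplying the Newton inequalities $S_{j-1}S_{j+1}\le S_j^2$ for $j=1,\dots,k-1$ with suitable powers, all of the factors being positive on $\Gamma_k$, gives the log-concavity consequence $S_{k-1}^{k-1}\ge S_1\,S_k^{k-2}$. Equivalently, the sequence $\log S_j$ is concave in $j$ for $0\le j\le k$. Concretely, concavity applied at $j=k-1$, between $1$ and $k$, yields
\[
\log S_{k-1}\ge \tfrac{1}{k-1}\log S_1+\tfrac{k-2}{k-1}\log S_k .
\]
Restoring the binomial coefficients gives the second inequality with a universal $C_2$.

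I expect the main obstacle to be the justification that Newton's inequalities can be chained on $\Gamma_k$ rather than only on the positive cone. Since Newton's inequality holds for all real vectors, the chaining only requires $S_1,\dots,S_k>0$, and that is exactly the defining property of $\Gamma_k$. With that observed, the argument is routine, so the proof is mainly a matter of citing \cite{Hardy1952,Wang2009}.
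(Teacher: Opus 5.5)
Your proof is correct and follows the same route as the Hardy--Littlewood--P\'olya and Wang references the paper cites; the paper itself gives no argument. You count how often each monomial of $\sigma_{k-1}$ survives in $\sigma_{k-1;i}$ to get the identity, and you derive both inequalities from Newton's inequalities $S_{j-1}S_{j+1}\le S_j^2$ (valid for all real $\lambda$), chained using positivity of $S_1,\dots,S_k$ on $\Gamma_k$ --- including your explicit constant $C_1$ and the convex combination $k-1=\frac{1}{k-1}\cdot 1+\frac{k-2}{k-1}\cdot k$ yielding $S_{k-1}^{k-1}\ge S_1S_k^{k-2}$, both of which check out.
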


\begin{lemma}[Section 3 of \cite{Wang2001}]\label{thm2.3}
Assume $\lambda\in\overline{\Gamma_{k}}$ and $\lambda_{1}\geq\lambda_{2}\geq\dots\geq\lambda_{n}$.
Then the following inequalities hold
\begin{equation}\label{eqn7.8}
\begin{cases}
\sigma_{k-1;n}(\lambda)\geq\sigma_{k-1;n-1}(\lambda)\geq\dots\geq\sigma_{k-1;1}(\lambda)\geq0;\\
\sigma_{k-1;k}(\lambda)\geq\theta\sigma_{k-1}(\lambda);\\
\lambda_{1}\sigma_{k-1;1}(\lambda)\geq\theta\sigma_{k}(\lambda),
\end{cases}
\end{equation}
where $\theta>0$ is a universal constant. 
Moreover, for any $\delta\in(0,1)$, there exists $\varepsilon>0$ such that if 
\[
\sigma_{k}(\lambda)\leq\varepsilon\lambda_{1}^{k}\quad \text{or}\quad |\lambda_{i}|\leq\varepsilon\lambda_{1}\quad \text{for}\ i=k+1,k+2,\dots,n,
\]
then the following inequality holds
\begin{equation}\label{eqn7.9}
\lambda_{1}\sigma_{k-1;1}(\lambda)\geq(1-\delta)\sigma_{k}(\lambda). 
\end{equation}
\end{lemma}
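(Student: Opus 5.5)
We work with a fixed $\lambda\in\overline{\Gamma_k}$ ordered as $\lambda_1\ge\dots\ge\lambda_n$, using two identities throughout:
\[
\sigma_k(\lambda)=\lambda_i\sigma_{k-1;i}(\lambda)+\sigma_{k;i}(\lambda),
\qquad
\sigma_{k-1;j}(\lambda)-\sigma_{k-1;i}(\lambda)=(\lambda_i-\lambda_j)\,\sigma_{k-2;ij}(\lambda).
\]
By continuity it suffices to treat $\lambda\in\Gamma_k$ and then let $\lambda$ approach the boundary. Below, $\theta$ and $\varepsilon$ denote constants depending only on $n$, $k$ (and on $\delta$ where stated).

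\textbf{Monotonicity.} First, $\sigma_{k-1;i}>0$ in $\Gamma_k$; this is the Caffarelli--Nirenberg--Spruck fact $\partial\sigma_k/\partial\lambda_i>0$ recalled in the introduction. Next, for $i<j$ we have $\lambda_i\ge\lambda_j$. The vector $\lambda$ with $\lambda_i$ and $\lambda_j$ both removed lies in $\Gamma_{k-2}$, because deleting entries maps $\Gamma_k$ into $\Gamma_{k-1}$ and then into $\Gamma_{k-2}$. Hence $\sigma_{k-2;ij}\ge0$, and the second identity gives $\sigma_{k-1;j}\ge\sigma_{k-1;i}$. This proves the first chain in \eqref{eqn7.8}.

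\textbf{The bound $\sigma_{k-1;k}\ge\theta\sigma_{k-1}$.} Sum the identity from Lemma \ref{thm4.1}:
\[
\sum_i\sigma_{k-1;i}=(n-k+1)\sigma_{k-1}.
\]
The indices $i\ge k$ contribute at most $(n-k+1)\sigma_{k-1;n}$, so it is enough to compare $\sigma_{k-1;n}$ with $\sigma_{k-1;k}$. Expand
\[
\sigma_{k-1;k}=\sigma_{k-1;kn}+\lambda_n\sigma_{k-2;kn}.
\]
If $\lambda_n\ge0$, every term is nonnegative and the comparison is elementary, since all $\sigma$'s are then monomial sums. If $\lambda_n<0$, use $\lambda\in\Gamma_k$, which gives the bound $|\lambda_n|\le C\lambda_k$ in the relevant terms. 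Then argue by induction on $k$ using the standard estimate $\lambda_1\cdots\lambda_{k-1}\le C\sigma_{k-1}$ and $\sigma_{k-1;k}\ge c\,\lambda_1\cdots\lambda_{k-1}$. I expect this last lower bound to be the main obstacle, because the negative entries have to be controlled. My first attempt is Lin--Trudinger style: show $\sigma_{k-1;k}\ge c\,\lambda_1\cdots\lambda_{k-1}$ by induction on $k$, reducing via $\sigma_{k-1;k}=\lambda_1\sigma_{k-2;1k}+\sigma_{k-1;1k}$ and using that $(\lambda_2,\dots)$ with $\lambda_k$ removed lies in $\Gamma_{k-2}$.

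\textbf{The bound $\lambda_1\sigma_{k-1;1}\ge\theta\sigma_k$.} From the first identity,
\[
\lambda_1\sigma_{k-1;1}=\sigma_k-\sigma_{k;1}.
\]
If $\sigma_{k;1}\le0$, the inequality holds with $\theta=1$. Otherwise, use $\sigma_{k;1}\le C\lambda_2\cdots\lambda_{k+1}$-type bounds together with $\sigma_{k-1;1}\ge c\,\lambda_2\cdots\lambda_k$ from the previous step. This gives $\sigma_{k;1}\le C\lambda_{k+1}\sigma_{k-1;1}/c\le C'\lambda_1\sigma_{k-1;1}$, and therefore $\sigma_k\le(1+C')\lambda_1\sigma_{k-1;1}$.

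\textbf{The refined inequality \eqref{eqn7.9}.} It reduces to showing $\sigma_{k;1}\le\delta\sigma_k$. Under the hypothesis $|\lambda_i|\le\varepsilon\lambda_1$ for $i>k$, the estimate above sharpens: $\sigma_{k;1}$ involves at least one factor $\lambda_i$ with $i>k$, or else it is bounded by $\lambda_{k+1}$-type terms. This gives
\[
\sigma_{k;1}\le C\varepsilon\lambda_1\sigma_{k-1;1},
\]
hence $\lambda_1\sigma_{k-1;1}\ge(1+C\varepsilon)^{-1}\sigma_k$, and we choose $\varepsilon$ small in terms of $\delta$. Under the hypothesis $\sigma_k\le\varepsilon\lambda_1^k$, argue by contradiction and compactness. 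Normalize $\lambda_1=1$ and suppose $\lambda^{(m)}$ violates \eqref{eqn7.9} while $\sigma_k(\lambda^{(m)})\to0$. Pass to a limit $\bar\lambda\in\partial\Gamma_k$ with $\bar\lambda_1=1$. Then show that the ratio $\sigma_{k;1}/\sigma_k$ cannot stay above $\delta$: near such boundary points $\sigma_{k;1}$ must degenerate at least as fast as $\sigma_k$, since $\sigma_k=\lambda_1\sigma_{k-1;1}+\sigma_{k;1}$ with $\sigma_{k-1;1}$ comparable to $\sigma_{k-1}$. This again needs careful bookkeeping of the sign of $\sigma_{k;1}$, and it is a secondary difficulty.
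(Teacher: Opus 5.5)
The paper gives no proof of this lemma; it quotes it from Chou--Wang, where the second inequality is the Lin--Trudinger inequality and the rest follows from Newton--Maclaurin. Measured against that argument, your outline has a genuine gap at its core.

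\textbf{The core bound is not proved.} Your second item, your third item, and the first half of your fourth item all rest on $\sigma_{k-1;k}\ge c\,\lambda_1\cdots\lambda_{k-1}$, which you never establish. You combine it with the easy bound $\sigma_{k-1}\le C\lambda_1\cdots\lambda_{k-1}$, which follows from $|\lambda_i|\le(n-k)\lambda_k$ for negative entries; note that you wrote this bound in the reverse direction, which is not what the conclusion needs. The induction you sketch does not close. Write $\sigma_{k-1;k}=\lambda_1 s_{k-2}+s_{k-1}$ with $s_j:=\sigma_{j;1k}$. The induction hypothesis applied to $\lambda':=(\lambda_2,\dots,\lambda_n)\in\Gamma_{k-1}$ gives $s_{k-2}\ge c\,\lambda_2\cdots\lambda_{k-1}$. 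However, $s_{k-1}$ can be negative and of the same order as $\lambda_1 s_{k-2}$: already for $k=2$, $\lambda=(1,1,-a)$ with $a\uparrow\frac12$ gives $s_1=-a$ against $\lambda_1 s_0=1$. So size bounds only work when $\lambda_1\gg\lambda_k$.

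The missing idea is to use $\sigma_k(\lambda)>0$ through the expansion $\sigma_k=\lambda_1\lambda_k s_{k-2}+(\lambda_1+\lambda_k)s_{k-1}+s_k$, together with Newton's inequality $s_k s_{k-2}\le A\,s_{k-1}^2$ with $A\le\frac{k-1}{k}$, which holds for every real vector. If $s_{k-1}<0$, set $x=-s_{k-1}/s_{k-2}$. Then $0<x<\lambda_1$ because $\sigma_{k-1;k}>0$, and $\sigma_k>0$ becomes $(\lambda_1-x)(\lambda_k-x)>(1-A)x^2$. This forces $\lambda_1-x\ge\lambda_1/(4k)$, hence $\sigma_{k-1;k}\ge\frac{1}{4k}\lambda_1 s_{k-2}$, which closes the induction. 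Alternatively, simply cite Lin--Trudinger, as Chou--Wang do.

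\textbf{Two further steps are wrong as written.}

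First, in the third inequality you use $\sigma_{k-1;1}\ge c\,\lambda_2\cdots\lambda_k$ as coming ``from the previous step''. This is false on $\Gamma_k$: for $k=2$, $n=3$, $\lambda=(N,1,-a)$ with $a\uparrow N/(N+1)$, one has $\sigma_{1;1}=1-a\to 1/(N+1)$ while $\lambda_2=1$. The bound does hold in your subcase $\sigma_{k;1}>0$, but only because then $\lambda'\in\Gamma_k$, and you must say so. In fact no lower bound is needed here: Newton--Maclaurin for $\lambda'\in\Gamma_{k-1}$ gives $\sigma_{k;1}\le\frac{n-k}{k(n-1)}\sigma_1(\lambda')\,\sigma_{k-1;1}\le\frac{n-k}{k}\lambda_1\sigma_{k-1;1}$, i.e.\ $\theta=k/n$.

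Second, for \eqref{eqn7.9} under $\sigma_k\le\varepsilon\lambda_1^k$, the compactness sketch is not an argument. The ratio $\sigma_{k;1}/\sigma_k$ is of the form $0/0$ at the limit point. Your stated reason, that $\sigma_{k-1;1}$ is comparable to $\sigma_{k-1}$, fails exactly in this regime: rescale the example to $(1,1/N,-a/N)$, for which $\sigma_2\to0$ while $\sigma_{1;1}/\sigma_1\approx N^{-2}$. The direct argument is short. Only the case $\sigma_{k;1}>0$ matters, and then $\lambda'\in\Gamma_k$, so the first inequality of Lemma \ref{thm4.1} gives $\sigma_{k;1}\le C\sigma_{k-1;1}^{k/(k-1)}$. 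Meanwhile $\lambda_1\sigma_{k-1;1}<\sigma_k\le\varepsilon\lambda_1^k$ gives $\sigma_{k-1;1}^{1/(k-1)}<\varepsilon^{1/(k-1)}\lambda_1$. Hence $\sigma_{k;1}\le C\varepsilon^{1/(k-1)}\lambda_1\sigma_{k-1;1}$, which yields \eqref{eqn7.9} for $\varepsilon$ small. Your treatment of the other hypothesis, via $\sigma_{k;1}\le C\lambda_{k+1}\sigma_{k-1;1}$, is fine once Lin--Trudinger is applied to $\lambda'\in\Gamma_k$.
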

\par
\vspace{2mm}

For an $n\times n$ real symmetric matrix $A$, define 
\[
F(A):=\sigma_{k}^{\frac{1}{k}}(\lambda(A))\quad \text{and}\quad F^{ij}(A):=\frac{\partial F}{\partial A_{ij}}.
\]
Then it follows from Lemma \ref{thm4.1} and equation \eqref{eqn1.1} that 
\begin{align}
\operatorname{tr}F^{ij}\big(D^{2}u\big)=\frac{1}{k}\sigma_{k}^{\frac{1}{k}-1}\sum_{i=1}^{n}\sigma_{k-1;i}
&=\frac{n-k+1}{k}\sigma_{k}^{\frac{1}{k}-1}\sigma_{k-1}\label{eqn7.10}\\
&\geq\frac{1}{C}\max\left\{1,\sigma_{1}^{\frac{1}{k-1}}f^{\frac{-1}{k(k-1)}}\right\},\label{eqn4.1}
\end{align}
where $C>0$ is a universal constant.
For convenience, define 
\[
g:=f^{\frac{3}{2k-2}}.
\]
We will need the following properties of positive functions.

\begin{lemma}[Lemma 2.1 of \cite{Lyu2025}]\label{thm2.1}
Let $g$ be a function satisfying $g>0$ in $\Omega_{0}$, where $\Omega\Subset\Omega_{0}$.
We have  

$(\romannumeral1)$ If $g\in C^{1,1}(\overline{\Omega_{0}})$, then 
\begin{equation}\label{eqn7.1}
\frac{\left|\nabla g(x)\right|^{2}}{g(x)}\leq K,\quad \forall x\in\overline{\Omega},
\end{equation}
where $K$ depends on $\|g\|_{C^{1,1}(\overline{\Omega_{0}})}$ and $\operatorname{dist}(\Omega,\partial\Omega_{0})$, but is independent of $\inf_{\Omega_{0}} g$.

$(\romannumeral2)$ If $g\in C^{2,1}(\overline{\Omega_{0}})$, then for any $\alpha<1/2$, 
\begin{equation}\label{eqn7.2}
\partial_{ee}g(x)-\alpha\frac{\left|\partial_{e}g(x)\right|^{2}}{g(x)}\geq-Kg^{\frac{1}{3}}(x),\quad \forall x\in\overline{\Omega},\ e\in\mathbb{S}^{n-1},
\end{equation}
where $K$ depends on $\alpha$, $\|g\|_{C^{2,1}(\overline{\Omega_{0}})}$, and  $\operatorname{dist}(\Omega,\partial\Omega_{0})$, but is independent of $\inf_{\Omega_{0}} g$.
\end{lemma}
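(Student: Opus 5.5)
The statement has two independent parts, and for both I would restrict $g$ to lines through a point $x\in\overline{\Omega}$. Fix $x\in\overline{\Omega}$ and $e\in\mathbb{S}^{n-1}$, and set $d:=\operatorname{dist}(\Omega,\partial\Omega_{0})$ and $h(t):=g(x+te)$ for $|t|\leq d$. Then $h>0$ on $[-d,d]$, and Taylor's theorem controls $h$ using only $\|g\|_{C^{1,1}}$, respectively $\|g\|_{C^{2,1}}$. Neither bound involves $\inf g$, so the resulting constants will not either. Throughout, write $a=g(x)$, $b=\partial_{e}g(x)$, $c=\partial_{ee}g(x)$. Let $M$ be the Lipschitz constant of $\nabla g$ in part (i) and of $D^{2}g$ in part (ii).

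For part (i), take $e=-\nabla g(x)/|\nabla g(x)|$, so that $b=-|\nabla g(x)|$. Taylor's theorem gives, for $0\leq s\leq d$,
\[
0<h(s)\leq a-|\nabla g(x)|s+\tfrac{M}{2}s^{2}.
\]
If $|\nabla g(x)|/M\leq d$, choose $s=|\nabla g(x)|/M$; this gives $|\nabla g(x)|^{2}\leq 2Ma$. Otherwise choose $s=d$; then $a\geq d|\nabla g(x)|-\tfrac{M}{2}d^{2}\geq\tfrac{d}{2}|\nabla g(x)|$. Hence $|\nabla g(x)|^{2}/a\leq 2\|g\|_{C^{1}}/d$. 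Together these give \eqref{eqn7.1}.

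For part (ii), Taylor's theorem now gives, for $|t|\leq d$,
\[
0<a+bt+\tfrac{c}{2}t^{2}+\tfrac{M}{6}|t|^{3}.
\]
I would extract two inequalities from this.

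\emph{Symmetric choice.} Evaluate at $t=\pm s$ and add the two inequalities; the $b$ terms cancel and we get $c\geq -2a/s^{2}-Ms/3$. Take $s=a^{1/3}$ when $a^{1/3}\leq d$. When $a^{1/3}>d$, $a$ is bounded below by $d^{3}$, so the crude bound $c\geq-\|g\|_{C^{2}}$ already has the form $-Ka^{1/3}$. In either case $c\geq -K_{0}a^{1/3}$.

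\emph{Asymmetric choice.} Take $t=-\operatorname{sign}(b)\,s$ with $s=2a/|b|$. This is the choice that optimizes the coefficient: the general choice $s=\tau a/|b|$ gives the coefficient $2(\tau-1)/\tau^{2}$, which is maximized at $\tau=2$ with value $1/2$. With $s=2a/|b|$ one obtains
\[
c\geq\frac{b^{2}}{2a}-\frac{2M}{3}\frac{a}{|b|}.
\]

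To combine the two, split according to the size of $|b|$; this is where $\alpha<1/2$ is used.

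If $|b|\geq a^{2/3}$, then $a/|b|\leq a^{1/3}$ and $s=2a/|b|\leq 2a^{1/3}$. So whenever $2a^{1/3}\leq d$, the asymmetric bound gives
\[
c-\alpha\frac{b^{2}}{a}\geq\Big(\tfrac12-\alpha\Big)\frac{b^{2}}{a}-\tfrac{2M}{3}a^{1/3}\geq-\tfrac{2M}{3}a^{1/3}.
\]

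If $|b|<a^{2/3}$, then $b^{2}/a<a^{1/3}$, and the symmetric bound gives $c-\alpha b^{2}/a\geq-(K_{0}+\alpha)a^{1/3}$.

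The remaining case $2a^{1/3}>d$ is handled by part (i): there $a$ is bounded below, so $b^{2}/a\leq K$ by \eqref{eqn7.1} and $|c|\leq\|g\|_{C^{2}}$. Hence $c-\alpha b^{2}/a$ is bounded below by a constant, which is at most a multiple of $a^{1/3}$ because $a^{1/3}>d/2$.

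The main obstacle is the asymmetric step: one has to balance the gain $b^{2}/(2a)$ against the cubic error $Ma/|b|$ at the right scale $s\sim a/|b|$. The threshold $|b|\sim a^{2/3}$ is exactly what turns that error into $a^{1/3}$ and makes it match the symmetric bound. Once this scaling is identified, the rest is routine case checking.
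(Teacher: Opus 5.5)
The paper does not prove this lemma; it imports it from \cite{Lyu2025}, so there is no argument in the text to compare yours against. Judged on its own, your proof is correct.

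Part (i) is the standard Glaeser-type argument. In part (ii) the asymmetric computation checks out: at $t=-\operatorname{sign}(b)\,2a/|b|$ one gets $0\le -a+2ca^{2}/b^{2}+\frac{4M}{3}a^{3}/|b|^{3}$, which is exactly your bound. Your three cases exhaust all possibilities:
$\{|b|\ge a^{2/3},\ 2a^{1/3}\le d\}$, $\{|b|<a^{2/3}\}$, and $\{2a^{1/3}>d\}$.
Each test point lies in $[-d,d]$ whenever you use it, and no constant involves $\inf g$.

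There are three cosmetic points.
\begin{enumerate}
\item At $|t|=d$ the point $x+te$ may lie on $\partial\Omega_{0}$, where you only know $h\ge0$. That is all you actually use.
\item For $\alpha<0$ the constant in the case $|b|<a^{2/3}$ should read $K_{0}+\max\{\alpha,0\}$ rather than $K_{0}+\alpha$. Alternatively, note that negative $\alpha$ follows from $\alpha=0$.
\item In the case $2a^{1/3}>d$ you do not need part (i), whose constant is phrased via the $C^{1,1}$ norm. The bound $a>d^{3}/8$ directly gives $b^{2}/a\le 8\|g\|_{C^{1}}^{2}/d^{3}$.
\end{enumerate}

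More substantively, your remark that the case split is ``where $\alpha<1/2$ is used'' undersells your own argument. In the case $|b|\ge a^{2/3}$ you merely drop the nonnegative term $(\frac12-\alpha)\frac{b^{2}}{a}$. The case $|b|<a^{2/3}$ only uses $\alpha b^{2}/a\le\max\{\alpha,0\}\,a^{1/3}$. So your proof gives \eqref{eqn7.2} for every $\alpha\le\frac12$, endpoint included.

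This matters here. In \eqref{eqn3.5} the lemma is applied with $\alpha=\frac{k+2}{3k}$, which equals $\frac12$ exactly when $k=4$. So at that step, the restriction $k\ge5$ is needed only because the lemma was stated with a strict inequality.
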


\subsection{Main results}

\begin{theorem}\label{thm2.4}
Let $f\geq0$ in $\Omega_{0}$ with $f^{3/(2k-2)}\in C^{2,1}(\overline{\Omega_{0}})$, where $\Omega\Subset\Omega_{0}$. 
Let $k\geq5$. 
Assume that there exists a $k$-admissible function $w\in C^{1,1}(\overline{\Omega})$ such that 
\[
w\geq u\ \ \text{in}\ \Omega\quad \text{and}\quad w=u\ \ \text{on}\ \partial\Omega. 
\]
Then the $k$-admissible solution $u\in C^{3,1}(\overline{\Omega})$ of equation  \eqref{eqn1.1} satisfies the estimate 
\[
(w-u)^{k-1}\left|D^{2}u\right|\leq C,
\]
where the constant $C>0$ depends on $\|u\|_{C^{1}(\overline{\Omega})}$, $\|w\|_{C^{1,1}(\overline{\Omega})}$, $\|f^{3/(2k-2)}\|_{C^{2,1}(\overline{\Omega_{0}})}$, and $\operatorname{dist}(\Omega,\partial\Omega_{0})$, but is independent of $\inf_{\Omega_{0}}f$.
\end{theorem}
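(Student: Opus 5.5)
We follow the Chou--Wang maximum principle argument, as adapted to the degenerate setting by Jiao--Jiao. A priori $f$ may vanish; we first reduce to $f>0$. Replace $f$ by $f_\epsilon$ with $f_\epsilon^{3/(2k-2)}=g+\epsilon$, solve the approximating problem, and pass to the limit. All constants will be independent of $\inf f$, which Lemma \ref{thm2.1} guarantees. So assume $g>0$ in $\Omega_0$.

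\textbf{Test function.} Consider
\[
\phi=(k-1)\log(w-u)+\log \lambda_{\max}(D^2u)+\frac{a}{2}|Du|^2+\frac{b}{2}|x|^2,
\]
with $a,b$ to be chosen. Since $w-u=0$ on $\partial\Omega$, $\phi$ attains its maximum at an interior point $x_0$. If $\lambda_{\max}$ is not smooth there, use the standard perturbation or work with $u_{11}$. Rotate coordinates so that $D^2u(x_0)$ is diagonal with $u_{11}=\lambda_1\ge\dots\ge\lambda_n$. At $x_0$ we have $\phi_i=0$ and $F^{ij}\phi_{ij}\le0$, where we use $F=\sigma_k^{1/k}$, or equivalently $\sigma_k^{ij}$.

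\textbf{Differentiating the equation.} Differentiate $\sigma_k=f$ twice in the $e_1$ direction. This gives
\[
\sigma_k^{ii}u_{11ii}=f_{11}-\sigma_k^{pq,rs}u_{pq1}u_{rs1}.
\]
The concavity term $-\sigma_k^{pq,rs}u_{pq1}u_{rs1}$ supplies the good terms $2\sum_{i\ge2}\sigma_k^{1i,1i}u_{11i}^2$. Together with the term $-\sigma_k^{ii}u_{11i}^2/\lambda_1^2$ from $\log\lambda_1$, these are handled exactly as in Chou--Wang, using Lemma \ref{thm2.3}. The case split is:

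\begin{itemize}
\item[(a)] $\lambda_k\ge\delta\lambda_1$. Then all $\sigma_k^{ii}$ are comparable and the third-order terms are absorbed by the usual argument.
\item[(b)] $\lambda_k<\delta\lambda_1$. Then \eqref{eqn7.9} holds with small $\delta$, which controls the $i=1$ third-order term, $\sigma_k^{11}u_{111}^2/\lambda_1^2$, via $\phi_1=0$.
\end{itemize}

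The new ingredient is the treatment of $f_{11}$ and of the first-derivative terms involving $f_1$. A term like $(\sigma_k^{11})^{-1}f_1^2/\lambda_1^2$, or $f_1^2/(f\lambda_1^2)$ in the $F$-formulation, comes from concavity when we write $F=f^{1/k}$.

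\textbf{Main obstacle: controlling $f_{11}$.} Write $f=g^{(2k-2)/3}$ and set $\beta=(2k-2)/3$. Then
\[
(f^{1/k})_{11}=\frac{\beta}{k}\,g^{\beta/k-1}\Big(g_{11}+\big(\tfrac{\beta}{k}-1\big)\frac{g_1^2}{g}\Big).
\]
We need $(\beta/k-1)$ to be at least $-\alpha$ with $\alpha<1/2$. More precisely, after absorbing part of $g_1^2/g$ into the concavity term $\sigma_k^{ii}$-weighted good terms, the effective coefficient must exceed $-1/2$. We then invoke \eqref{eqn7.2} to bound the result below by $-Kg^{\beta/k-1+1/3}$.

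The power $g^{\beta/k-2/3}=f^{\frac1k-\frac{1}{k-1}}=f^{-1/(k(k-1))}$ is exactly the weight appearing in \eqref{eqn4.1}. So $-Kf^{-1/(k(k-1))}\ge -C\lambda_1^{-1/(k-1)}\operatorname{tr}F^{ij}$, since $\sigma_1\ge\lambda_1$, and this is absorbed by the $b\operatorname{tr}F^{ij}$ term. The restriction $k\ge5$ should enter precisely in checking that the coefficient condition $\alpha<1/2$ is compatible: $1-\beta/k=(k+2)/(3k)$, possibly reduced by the good third-order term. That coefficient check is the delicate arithmetic, and we would verify it first. The remaining terms are $f_1$-linear terms from $\phi_i=0$ and $a|Du|^2$; they are bounded by \eqref{eqn7.1} applied to $g$, with the same weight $f^{-1/(k(k-1))}$.

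\textbf{Conclusion.} Finally, combine $aF^{ii}u_{ii}^2\ge a\theta\lambda_1\cdot\lambda_1\sigma_k^{1/k-1}\sigma_{k-1;1}/k$ with Lemma \ref{thm2.3}, together with the terms $-(k-1)F^{ii}(w-u)_i^2/(w-u)^2$. These are controlled via $\phi_i=0$ in the standard way. This gives $(w-u)\lambda_1^{1/(k-1)}\le C$ at $x_0$, and hence the estimate everywhere.
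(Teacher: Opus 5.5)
Your proposal is essentially the paper's proof. It uses the Chou--Wang maximum principle for $(w-u)^{k-1}\lambda_{\max}(D^2u)$ with a split according to whether $\lambda_k$ is comparable to $\lambda_1$. It uses concavity of $F=\sigma_k^{1/k}$ to reduce $F^{ii}u_{11ii}$ to $\partial_{11}(f^{1/k})$ plus good terms. It then applies part (ii) of Lemma \ref{thm2.1} with $\alpha=\frac{k+2}{3k}$ to get $\partial_{11}(f^{1/k})\geq -Kf^{-1/(k(k-1))}$, and absorbs this weight via \eqref{eqn4.1}. Your linear terms $\frac a2|Du|^2+\frac b2|x|^2$ are only a cosmetic substitute for the paper's $\log\varphi$.

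A few points to tidy up:
\begin{itemize}
\item The coefficient check you postpone is immediate and needs no help from third-order terms, since $\frac{k+2}{3k}<\frac12$ exactly when $k>4$.
\item The approximation step can be dropped. $k$-admissibility already gives $f=\sigma_k(\lambda(D^2u))>0$ in $\Omega$, and the estimate is a priori.
\item In your case (b), \eqref{eqn7.9} is what controls the $i\geq2$ third-order terms. The $i=1$ term is handled by $\phi_1=0$ together with $aF^{11}\lambda_1^2$.
\end{itemize}
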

\begin{proof}
For convenience, we let $K$ denote a positive constant which may vary from line to line.
We follow the proof of the well-known Theorem 1.5 in \cite{Wang2001}.
Consider the auxiliary function 
\[
G(x,\xi):=\rho^{\beta}\varphi\left(\frac{1}{2}|Du|^{2}+\frac{1}{2}|x|^{2}\right)u_{\xi\xi}\quad \text{in}\ \Omega\times\mathbb{S}^{n-1},  
\]
where $\beta>0$ is to be determined, $\varphi(t)=(1-\frac{t}{M})^{-1/8}$, $M=2\sup_{x\in\Omega}(|Du|^{2}+|x|^{2})$, and 
\[
\rho=w-u. 
\]
Suppose that $G$ attains its maximum at an interior point $x_{0}$ in the direction $\xi=e_{1}$. 
Thus 
\[
H:=\beta\log\rho+\log\varphi\left(\frac{1}{2}|Du|^{2}+\frac{1}{2}|x|^{2}\right)+\log u_{11}
\]
also attains its maximum at $x_{0}$.
Assume that $D^{2}u$ is already diagonal at $x_{0}$ with $u_{11}\geq u_{22}\geq\dots\geq u_{nn}$. 
Then at $x_{0}$, 
\begin{equation}\label{eqn2.1}
0=H_{i}=\beta\frac{\rho_{i}}{\rho}+\frac{\varphi_{i}}{\varphi}+\frac{u_{11i}}{u_{11}}\quad \text{for}\ i=1,2,\dots,n,
\end{equation}
and 
\begin{align}
0&\geq\sum_{i=1}^{n}F^{ii}H_{ii}\nonumber\\
&=\sum_{i=1}^{n}\beta F^{ii}\left[\frac{\rho_{ii}}{\rho}-\frac{\rho_{i}^{2}}{\rho^{2}}\right]
+\sum_{i=1}^{n}F^{ii}\left[\frac{\varphi_{ii}}{\varphi}-\frac{\varphi_{i}^{2}}{\varphi^{2}}\right]
+\sum_{i=1}^{n}F^{ii}\left[\frac{u_{11ii}}{u_{11}}-\frac{u_{11i}^{2}}{u_{11}^{2}}\right].\label{eqn2.2} 
\end{align}
By our special choice of $\rho$, we have at $x_{0}$, 
\begin{equation}\label{eqn2.7}
\sum_{i}F^{ii}\rho_{ii}=\sum_{i,j}F^{ij}\big(D^{2}u\big)(w_{ij}-u_{ij})\geq-\sum_{i,j}F^{ij}\big(D^{2}u\big)u_{ij}=-f^{\frac{1}{k}}.
\end{equation}
At $x_{0}$, we also have  
\begin{align}
\sum_{i}F^{ii}\left[\frac{\varphi_{ii}}{\varphi}-3\frac{\varphi_{i}^{2}}{\varphi^{2}}\right]&=\left(\frac{\varphi''}{\varphi}-3\frac{\varphi'^{2}}{\varphi^{2}}\right)\sum_{i}F^{ii}(u_{i}u_{ii}+x_{i})^{2}\nonumber\\
&\hspace{4.5mm}+\frac{\varphi'}{\varphi}\left(\sum_{i,j}u_{j}F^{ii}u_{iij}+\sum_{i}F^{ii}u_{ii}^{2}+\operatorname{tr}F^{ij}\right)\nonumber\\
&\geq\gamma\left(\sum_{i}F^{ii}u_{ii}^{2}+\operatorname{tr}F^{ij}\right)-K\left|D\Big(f^{\frac{1}{k}}\Big)\right|,\label{eqn2.5}
\end{align}
where the specific choice $\varphi(t)=(1-\frac{t}{M})^{-1/8}$ ensures that 
\[
K\geq\frac{\varphi''}{\varphi}-3\frac{\varphi'^{2}}{\varphi^{2}}\geq0\quad \text{and}\quad K\geq\frac{\varphi'}{\varphi}\geq\frac{1}{8M}=:\gamma>0,
\]
since $t\in[0,M/4]$ by the definition of $M$.
It follows from \eqref{eqn2.1} that 
\begin{equation}\label{eqn2.3}
\frac{u_{11i}}{u_{11}}=-\left(\frac{\varphi_{i}}{\varphi}+\beta\frac{\rho_{i}}{\rho}\right)\quad \text{for}\ i=1,2,\dots,n, 
\end{equation}  																
and 
\begin{equation}\label{eqn2.4}
\frac{\rho_{i}}{\rho}=-\frac{1}{\beta}\left(\frac{\varphi_{i}}{\varphi}+\frac{u_{11i}}{u_{11}}\right)\quad \text{for}\ i=1,2,\dots,n.
\end{equation}

We consider two cases separately. 

\par
\vspace{2mm}
\textbf{Case 1}: \emph{$u_{kk}<\varepsilon u_{11}$, where $\varepsilon$ is a small positive constant to be determined.}
				
Putting \eqref{eqn2.3} for $i=1$ and \eqref{eqn2.4} for $i=2,3,\dots,n$ into \eqref{eqn2.2}, we obtain 
\begin{align*}
0&\geq\left\{\sum_{i=1}^{n}\left[\beta F^{ii}\frac{\rho_{ii}}{\rho}+F^{ii}\left(\frac{\varphi_{ii}}{\varphi}-3\frac{\varphi_{i}^{2}}{\varphi^{2}}\right)\right]-\beta(1+2\beta)F^{11}\frac{\rho_{1}^{2}}{\rho^{2}}\right\}\\
&\hspace{4.5mm}+\left\{\sum_{i=1}^{n}\frac{F^{ii}u_{11ii}}{u_{11}}-\left(1+\frac{2}{\beta}\right)\sum_{i=2}^{n}F^{ii}\frac{u_{11i}^{2}}{u_{11}^{2}}\right\}\\
&=:I_{1}+I_{2}.
\end{align*} 
By \eqref{eqn2.7} and \eqref{eqn2.5}, we have 
\begin{align}
I_{1}&\geq\gamma\big(F^{11}u_{11}^{2}+\operatorname{tr}F^{ij}\big)-\frac{KF^{11}}{\rho^{2}}-\frac{\beta f^{\frac{1}{k}}}{\rho}-K\left|D\Big(f^{\frac{1}{k}}\Big)\right|\nonumber\\
&=F^{11}u_{11}^{2}\left(\gamma-\frac{K}{(\rho u_{11})^{2}}\right)+\gamma\operatorname{tr}F^{ij}-\frac{\beta f^{\frac{1}{k}}}{\rho}-K\left|D\Big(f^{\frac{1}{k}}\Big)\right|\nonumber\\
&\geq\gamma\operatorname{tr}F^{ij}-\frac{\beta f^{\frac{1}{k}}}{\rho}-K\left|D\Big(f^{\frac{1}{k}}\Big)\right|,\label{eqn2.21} 
\end{align}
provided that $\rho u_{11}$ is sufficiently large; otherwise, the desired estimate holds trivially.
From the equality (4.3) in \cite{Wang2001} and the concavity of $F$, we derive  
\begin{align}
u_{11}I_{2}&\geq \partial_{11}\Big(f^{\frac{1}{k}}\Big)+\sum_{i,j=1}^{n}\frac{1}{k}\sigma_{k}^{\frac{1}{k}-1}\sigma_{k-2;ij}u_{1ij}^{2}-\left(1+\frac{2}{\beta}\right)\sum_{i=2}^{n}\frac{1}{k}\sigma_{k}^{\frac{1}{k}-1}\sigma_{k-1;i}\frac{u_{11i}^{2}}{u_{11}}\nonumber\\
&\geq \partial_{11}\Big(f^{\frac{1}{k}}\Big)+\frac{2}{k}f^{\frac{1}{k}-1}\sum_{i=2}^{n}\left(\sigma_{k-2;1i}-\left(\frac{1}{2}+\frac{1}{\beta}\right)\frac{\sigma_{k-1;i}}{u_{11}}\right)u_{11i}^{2}.\label{eqn3.1}
\end{align}
Taking $\beta\geq4$ and using \eqref{eqn7.9}, we can fix $\varepsilon$ sufficiently small such that 
\begin{equation}\label{eqn3.2}
\sigma_{k-2;1i}-\left(\frac{1}{2}+\frac{1}{\beta}\right)\frac{\sigma_{k-1;i}}{u_{11}}\geq0,\quad \forall2\leq i\leq n.
\end{equation}
It follows from \eqref{eqn2.21}--\eqref{eqn3.2} that 
\begin{equation}\label{eqn3.3}
0\geq I_{1}+I_{2}\geq\gamma\operatorname{tr}F^{ij}-\frac{\beta f^{\frac{1}{k}}}{\rho}-K\left|D\Big(f^{\frac{1}{k}}\Big)\right|+\frac{1}{u_{11}}\partial_{11}\Big(f^{\frac{1}{k}}\Big).
\end{equation}

By \eqref{eqn7.1}, we get 
\begin{equation}\label{eqn3.4}
\left|D\Big(f^{\frac{1}{k}}\Big)\right|=\left|D\Big(g^{\frac{2k-2}{3k}}\Big)\right|\leq Kg^{\frac{2k-2}{3k}-\frac{1}{2}}=Kf^{\frac{1}{4(k-1)}-\frac{1}{k(k-1)}}.
\end{equation}
By \eqref{eqn7.2}, we get 
\begin{equation}\label{eqn3.5}
\partial_{11}\Big(f^{\frac{1}{k}}\Big)=\frac{2k-2}{3k}g^{\frac{-k-2}{3k}}\left(\partial_{11}g-\frac{k+2}{3k}\frac{\left|\partial_{1}g\right|^{2}}{g}\right)\geq-Kg^{-\frac{2}{3k}}=-Kf^{\frac{-1}{k(k-1)}},
\end{equation}
since 
\[
\frac{k+2}{3k}<\frac{1}{2},\quad \forall k\geq5.
\] 
It follows from \eqref{eqn3.3}--\eqref{eqn3.5} and \eqref{eqn4.1} that 
\begin{align*}
0&\geq\frac{\gamma }{C}\sigma_{1}^{\frac{1}{k-1}}f^{\frac{-1}{k(k-1)}}-\frac{K}{\rho}-\left(K+\frac{K}{u_{11}}\right)f^{\frac{-1}{k(k-1)}}\\
&=\frac{1}{\rho}\left[\left(\frac{\gamma }{C}\left(\rho^{k-1}\sigma_{1}\right)^{\frac{1}{k-1}}-\rho K-\frac{\rho K}{u_{11}}\right)f^{\frac{-1}{k(k-1)}}-K\right],
\end{align*}
where $C>0$ is a universal constant.
Therefore, since $\lambda(D^{2}u)\in\Gamma_{2}$, we obtain 
\[
\rho^{k-1}u_{11}\leq\rho^{k-1}\sigma_{1}\leq K.
\]
\par
\vspace{2mm}
\textbf{Case 2}: \emph{$u_{kk}\geq\varepsilon u_{11}$.}

Putting \eqref{eqn2.3} into \eqref{eqn2.2} yields 
\begin{equation}\label{eqn2.8}
0\geq\beta F^{ii}\left[\frac{\rho_{ii}}{\rho}-(1+2\beta)\frac{\rho_{i}^{2}}{\rho^{2}}\right]
+F^{ii}\left[\frac{\varphi_{ii}}{\varphi}-3\frac{\varphi_{i}^{2}}{\varphi^{2}}\right]
+F^{ii}\frac{u_{11ii}}{u_{11}}. 
\end{equation}
By the concavity of $F$, 
\begin{equation}\label{eqn2.9}
F^{ii}u_{11ii}\geq\partial_{11}\Big(f^{\frac{1}{k}}\Big). 
\end{equation}
It follows from \eqref{eqn2.5}, \eqref{eqn7.8}, and \eqref{eqn7.10} that 
\begin{align}\label{eqn2.10}
F^{ii}\left[\frac{\varphi_{ii}}{\varphi}-3\frac{\varphi_{i}^{2}}{\varphi^{2}}\right]&\geq\gamma F^{kk}u_{kk}^{2}-K\left|D\Big(f^{\frac{1}{k}}\Big)\right|\nonumber\\
&\geq\gamma\frac{1}{k}\sigma_{k}^{\frac{1}{k}-1}\theta_{1}\sigma_{k-1}\varepsilon^{2}u_{11}^{2}-K\left|D\Big(f^{\frac{1}{k}}\Big)\right|\nonumber\\
&=\gamma\varepsilon^{2}\theta_{2}u_{11}^{2}\operatorname{tr}F^{ij}-K\left|D\Big(f^{\frac{1}{k}}\Big)\right|,
\end{align}
where $\theta_{1}>0$ and $\theta_{2}>0$ are universal constants. 
Combining \eqref{eqn2.7} and \eqref{eqn2.8}--\eqref{eqn2.10}, we derive
\begin{equation}\label{eqn3.6}
0\geq-\frac{\beta f^{\frac{1}{k}}}{\rho}-K\frac{\operatorname{tr}F^{ij}}{\rho^{2}}+\gamma\varepsilon^{2}\theta_{2}u_{11}^{2}\operatorname{tr}F^{ij}-K\left|D\Big(f^{\frac{1}{k}}\Big)\right|+\frac{1}{u_{11}}\partial_{11}\Big(f^{\frac{1}{k}}\Big).
\end{equation}
Using \eqref{eqn3.6}, \eqref{eqn3.4}, \eqref{eqn3.5}, and \eqref{eqn4.1}, we obtain 
\begin{align}
0&\geq\left(\gamma\varepsilon^{2}\theta_{2}(\rho u_{11})^{2}-K\right)\operatorname{tr}F^{ij}-\rho\beta f^{\frac{1}{k}}-\rho^{2}Kf^{\frac{-1}{k(k-1)}}-\frac{\rho^{2}K}{u_{11}}f^{\frac{-1}{k(k-1)}}\nonumber\\
&\geq\left(\frac{\gamma\varepsilon^{2}\theta_{2}(\rho u_{11})^{2}-K}{C}\sigma_{1}^{\frac{1}{k-1}}-K\right)f^{\frac{-1}{k(k-1)}}-K,\label{eqn6.1}
\end{align}
provided that $\rho u_{11}$ is sufficiently large; otherwise, the desired estimate holds trivially.
Here, $C>0$ is a universal constant.
From \eqref{eqn6.1}, we conclude 
\[
\rho u_{11}\leq K.
\]

Therefore, taking $\beta=k-1$, we obtain $G(x_{0})\leq K$. 
The proof of Theorem \ref{thm2.4} is complete. 
\end{proof}

\end{document}